\newtheorem{theorem}{Theorem}[section]
\theoremstyle{definition}
\newtheorem*{note}{Note}
\newtheorem*{noteadded}{Note added in proof}
\numberwithin{equation}{section}
\newcounter{smallromans}
\newenvironment{romanenumerate}
{\begin{list}{{\normalfont\textrm{(\roman{smallromans})}}}%
    {\usecounter{smallromans}\setlength{\itemindent}{0cm}%
      \setlength{\leftmargin}{5.5ex}\setlength{\labelwidth}{5.5ex}%
      \setlength{\topsep}{0.2ex}\setlength{\partopsep}{0ex}%
      \setlength{\itemsep}{0.2ex}}}%
  {\end{list}}
\newcommand{\romanref}[1]{{\normalfont\textrm{(\ref{#1})}}}
\newcounter{smallalphs}
\renewcommand{\le}{\ensuremath{\leqslant}}
\renewcommand{\ge}{\ensuremath{\geqslant}}
\newcommand{\N}{\mathbb{N}}
\newcommand{\R}{\mathbb{R}}
\newcommand{\C}{\mathbb{C}}
\newcommand{\K}{\mathbb{K}}
\renewcommand{\phi}{\ensuremath{\varphi}}
\renewcommand{\epsilon}{\ensuremath{\varepsilon}}
\newcommand{\clspa}{\overline{\operatorname{span}}\,}
\newcommand{\ad}{\operatorname{Ad}}
\newcommand{\smashw}[2][l]{{\text{\makebox[0pt][#1]{$#2$}}}}
\begin{document}
\title[Extensions and the weak Calkin algebra of Read's Banach
  space]{Extensions and the weak Calkin algebra of Read's Banach space
  admitting discontinuous derivations}
\dedicatory{In memoriam: Charles J.~Read (1958--2015)} 
\subjclass[2010]%
{Primary
46H10, 
  46M18, 
  47L10; 
   Secondary 
  16S70} 
\author[N.~J.~Laustsen]{Niels Jakob Laustsen} \address{Department of
  Mathematics and Statistics, Fylde College, Lancaster University,
  Lancaster LA1 4YF, United Kingdom}
\email{n.laustsen@lancaster.ac.uk}
\author[R.~Skillicorn]{Richard Skillicorn} \address{Department of
  Mathematics and Statistics, Fylde College, Lancaster University,
  Lancaster LA1 4YF, United Kingdom}
\email{r.skillicorn@lancaster.ac.uk}
\keywords{Bounded operator; Read's Banach space; Banach algebra;
  short-exact sequence;  strong splitting; discontinuous derivation.}
\begin{abstract} 
Read produced the first example of a Banach space~$E_{\text{R}}$ such
that the associated Banach algebra~$\mathscr{B}(E_{\text{R}})$ of
bounded operators admits a discontinuous derivation (\emph{J.~Lon\-don
  Math.\ Soc.}~1989).  We generalize Read's main theorem
about~$\mathscr{B}(E_{\text{R}})$ from which he deduced this
conclusion, as well as the key technical lemmas that his proof relied
on, by constructing a strongly split-exact sequence
\[ \{0\}\longrightarrow\mathscr{W}(E_{\text{R}})%
\longrightarrow\mathscr{B}(E_{\text{R}})%
   \mathrel{\substack{{\displaystyle{\longrightarrow}}\\[-.5ex]%
       {\displaystyle{\longleftarrow}}}}
   \ell_2^\sim\longrightarrow\{0\}, \] 
where $\mathscr{W}(E_{\text{R}})$ denotes the ideal of weakly compact
operators on~$E_{\text{R}}$, while~$\ell_2^\sim$ is the unitization of 
the Hilbert space~$\ell_2$, endowed with the zero product.\bigskip

\noindent%
To appear in \emph{Studia Mathematica.} 
\end{abstract}
\maketitle
\section{Introduction and statement of the main result}%
\label{section1}
\noindent 
In 1989, Read~\cite{read} published the construction of a remarkable
Banach space that we shall denote by~$E_{\text{R}}$. Read's purpose
was to produce an example of a discontinuous derivation from the
Banach algebra~$\mathscr{B}(E)$ of bounded operators on a Banach
space~$E$, thus answering an open question in automatic continuity
theory going back at least to~\cite{johnson}, in which Johnson had
shown that, for each Banach space~$E$ that is isomorphic to its
Cartesian square~\mbox{$E\oplus E$} (or, more generally, such that~$E$
has a `continued bisection'), every homomorphism from~$\mathscr{B}(E)$
into a Banach algebra is continuous. A general result states that the
continuity of all homomorphisms from a Banach algebra implies the
continuity of all derivations from it; see, \emph{e.g.,}
\cite[Theorem~2.7.5(i)]{dales}. Hence Read's result implies that
Johnson's theorem does not extend to all Banach spaces, and it was the
first example of a Banach space~$E$ for which~$\mathscr{B}(E)$ admits
a discontinuous homomorphism.

Read's strategy was to establish the following theorem, from which the
existence of a discontinuous derivation
from~$\mathscr{B}(E_{\text{R}})$ into a one-dimensional Banach
$\mathscr{B}(E_{\text{R}})$-bimodule follows by standard methods, as
detailed in \cite[Theorem~1]{read}.

\begin{theorem}[Read]\label{readMainThm}
There exists a Banach space~$E_{\normalfont{\text{R}}}$ such that the
Banach algebra $\mathscr{B}(E_{\normalfont{\text{R}}})$ contains a
closed ideal~$\mathscr{I}$ of codimension one, but
the closed linear span of its products,
\[ \mathscr{I}^2 = \clspa\{ ST : S,T\in\mathscr{I}\}, \]
has infinite codimension in~$\mathscr{B}(E_{\normalfont{\text{R}}})$. 
More precisely, 
\begin{romanenumerate}
\item\label{readMainThm1} $\mathscr{I}$ contains the ideal
  $\mathscr{W}(E_{\normalfont{\text{R}}})$ of weakly compact operators
  on~$E_{\normalfont{\text{R}}};$
\item\label{readMainThm2} $\mathscr{W}(E_{\normalfont{\text{R}}})$ has
  infinite codimension in~$\mathscr{B}(E_{\normalfont{\text{R}}});$
\item\label{readMainThm3}
  $\mathscr{I}^2\subseteq\mathscr{W}(E_{\normalfont{\text{R}}}).$
\end{romanenumerate}
\end{theorem}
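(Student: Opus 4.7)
The strategy is to construct $E_{\text{R}}$ so that $\mathscr{B}(E_{\text{R}})$ admits a continuous unital algebra homomorphism $\psi\colon\mathscr{B}(E_{\text{R}})\to\ell_2^\sim$ whose kernel is exactly $\mathscr{W}(E_{\text{R}})$. Once this is achieved, the ideal $\mathscr{I}=\psi^{-1}(\ell_2)$ is closed of codimension one in $\mathscr{B}(E_{\text{R}})$, and parts~\romanref{readMainThm1}--\romanref{readMainThm3} follow almost mechanically from the fact that the augmentation ideal of $\ell_2^\sim$ is $\ell_2$, which is infinite-dimensional and carries the zero product.

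Concretely, I would construct $E_{\text{R}}$ as an inductively defined direct sum of finite-dimensional blocks $(F_n)_{n\in\N}$, with norms and linking maps tuned so that every $T\in\mathscr{B}(E_{\text{R}})$ admits a unique decomposition
\[ T = \lambda_T\, I_{E_{\text{R}}} + \Phi(v_T) + W_T, \]
where $\lambda_T\in\C$, $v_T\in\ell_2$, $W_T\in\mathscr{W}(E_{\text{R}})$, and $\Phi\colon\ell_2\to\mathscr{B}(E_{\text{R}})$ is a fixed bounded linear map built from a distinguished embedded copy of~$\ell_2$ inside~$E_{\text{R}}$. The crucial properties to extract from the construction are that $\Phi(v)\Phi(w)$, $\Phi(v)W$ and $W\Phi(v)$ all lie in $\mathscr{W}(E_{\text{R}})$ for every $v,w\in\ell_2$ and $W\in\mathscr{W}(E_{\text{R}})$; these identities will encode the zero-product structure of the augmentation ideal in~$\ell_2^\sim$.

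Granting the decomposition, define $\psi(T)=(\lambda_T,v_T)\in\ell_2^\sim$. Linearity and boundedness are immediate from uniqueness, and the identities above give $\psi(ST)=\psi(S)\psi(T)$ modulo $\mathscr{W}(E_{\text{R}})$, so $\psi$ is a unital algebra homomorphism with $\ker\psi=\mathscr{W}(E_{\text{R}})$. Setting $\mathscr{I}=\{T\in\mathscr{B}(E_{\text{R}}):\lambda_T=0\}=\psi^{-1}(\ell_2)$, part~\romanref{readMainThm1} is clear because weakly compact operators lie in $\ker\psi$ and so satisfy $\lambda_T=0$. Part~\romanref{readMainThm2} follows because $\psi$ maps $\mathscr{I}$ onto the infinite-dimensional space~$\ell_2$, so $\mathscr{W}(E_{\text{R}})=\ker\psi$ has infinite codimension. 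Part~\romanref{readMainThm3} holds because $\psi(\mathscr{I}^2)\subseteq\ell_2\cdot\ell_2=\{0\}$ in~$\ell_2^\sim$, hence $\mathscr{I}^2\subseteq\ker\psi=\mathscr{W}(E_{\text{R}})$.

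The main obstacle is, unsurprisingly, the construction of $E_{\text{R}}$ itself together with the proof of the structural decomposition. One needs quantitative control over how arbitrary bounded operators interact with the block structure: off-diagonal `mixing' between widely separated blocks must be forced to be weakly compact, while a one-parameter $\ell_2$-family of `rank-one-like' operators must survive this collapse and compose to weakly compact operators among themselves. All the delicate analytic work sits in calibrating the block norms so that both phenomena coexist; once the decomposition is established, the algebraic deduction of \romanref{readMainThm1}--\romanref{readMainThm3} is elementary.
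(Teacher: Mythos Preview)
Your algebraic reduction is exactly what the paper does: deduce Theorem~\ref{readMainThm} from the existence of a continuous unital homomorphism $\psi\colon\mathscr{B}(E_{\text{R}})\to\ell_2^\sim$ with kernel $\mathscr{W}(E_{\text{R}})$, then set $\mathscr{I}=\psi^{-1}(\ell_2)$ and read off \romanref{readMainThm1}--\romanref{readMainThm3} from the zero product on~$\ell_2$. Your derivation of the three clauses is clean and correct.

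Where your proposal diverges from the paper is in the construction of~$E_{\text{R}}$ and the mechanism producing~$\psi$. Read's space is \emph{not} a direct sum of finite-dimensional blocks with tuned norms; it is a quotient $E_{\text{R}}=Y/N$, where $Y=\bigl(\bigoplus_{i\ge0}JB_i\bigr)_{\ell_2}$ is an $\ell_2$-sum of infinite-dimensional James-like spaces, each quasi-reflexive of order one. The homomorphism~$\psi$ is not obtained from a direct-sum decomposition of operators but rather factors through the canonical map $\Theta_0\colon\mathscr{B}(E_{\text{R}})\to\mathscr{B}(E_{\text{R}}^{**}/E_{\text{R}})$ induced by biduals; the point is that quasi-reflexivity forces $E_{\text{R}}^{**}/E_{\text{R}}$ to be a Hilbert space~$H$, and the hard analytic work (Read's Lemma~4.1) shows that $(\ad U)\circ\Theta_0(T)$ always has the very restricted matrix form~\eqref{ReadEq411}, which is precisely the image of~$\ell_2^\sim$ inside~$\mathscr{B}(H)$. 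The ``incomparability'' condition~\eqref{ReadDefIncomp} on the underlying symmetric-basis spaces~$B_i$ is what rigidifies the action on~$H$ to this form; your sketch offers no analogue of this mechanism.

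So the gap is not in the logic but in the construction: you have correctly identified the target structure ($\psi$ onto $\ell_2^\sim$ with kernel $\mathscr{W}$), but the route you propose---calibrating finite-dimensional blocks to force a decomposition $T=\lambda_T I+\Phi(v_T)+W_T$---is speculative and not how it is actually done. The real engine is the bidual-quotient picture coming from quasi-reflexivity, and you would need to engage with that (or propose a genuinely new construction and carry it through) before this counts as a proof.
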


This theorem will be an immediate consequence of our main result,
which will also incorporate and strengthen the key technical lemmas
that Read used to establish it, as we shall explain below, once we
have stated our main result precisely. It involves the following
notation. Endow the separable, infinite-dimensional Hilbert
space~$\ell_2$ with the zero product, and denote its unitization
by~$\ell_2^{\sim}$, so that $\ell_2^{\sim} = \ell_2\oplus\K 1$ as a
vector space (where~$\K$ denotes the scalar field, either~$\R$
or~$\C$, and~$1$ is the formal identity that we adjoin), and the
product and norm on~$\ell_2^{\sim}$ are given by
\[ (\xi + \lambda
1)(\eta+\mu 1) = \lambda \eta+\mu \xi+ \lambda\mu 1\quad
\text{and}\quad \|\xi + \lambda 1\| = \|\xi\| + |\lambda|\qquad
(\xi,\eta\in\ell_2,\, \lambda,\mu\in\mathbb{K}). \]%
\begin{theorem}\label{WEBEsplitexact} There exists a continuous, 
surjective homomorphism~$\psi$ from the Banach
algebra~$\mathscr{B}(E_{\normalfont{\text{R}}})$ onto~$\ell_2^{\sim}$
with $\ker\psi = \mathscr{W}(E_{\normalfont{\text{R}}})$ such that the
short-exact sequence
\[ \spreaddiagramcolumns{2ex}\xymatrix{\{0\}\ar[r] &
    \mathscr{W}(E_{\normalfont{\text{R}}})\ar[r] &
    \mathscr{B}(E_{\normalfont{\text{R}}})\ar^-{\displaystyle{\psi}}[r]
    & \ell_2^{\sim}\ar[r] & \{0\}} \]
splits strongly, in the sense that there is a
continuous homomorphism from~$\ell_2^{\sim}$ into
$\mathscr{B}(E_{\normalfont{\text{R}}})$ which is a right inverse
of~$\psi$.
\end{theorem}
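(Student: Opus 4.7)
The plan is to upgrade Read's qualitative existence results into a structured Banach-algebra isomorphism $\mathscr{B}(E_{\text{R}})/\mathscr{W}(E_{\text{R}})\cong\ell_2^{\sim}$, and then to exhibit a bounded linear algebra section. First, apply Theorem~\ref{readMainThm} to write $\mathscr{B}(E_{\text{R}}) = \mathscr{I} \oplus \K I$; the identity $I$ lies outside the proper ideal~$\mathscr{I}$, so every $T\in\mathscr{B}(E_{\text{R}})$ decomposes uniquely as $T = (T-\chi(T)I) + \chi(T)I$ where $\chi\colon\mathscr{B}(E_{\text{R}})\to\K$ is the character vanishing on~$\mathscr{I}$. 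This character will supply the $\K 1$-component of~$\psi$, while $\mathscr{I}$ will supply the $\ell_2$-component.

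Next, revisit the technical core of Read's proof to construct a continuous linear map $\sigma\colon\mathscr{I}\to\ell_2$ with $\ker\sigma = \mathscr{W}(E_{\text{R}})$ and $\sigma(\mathscr{I}) = \ell_2$. Read's space is built on a dense subspace carrying a distinguished basis, and the lemmas behind Theorem~\ref{readMainThm} associate to each $T\in\mathscr{I}$ a scalar sequence measuring its action on this basis; the task is to sharpen those lemmas so that this sequence is always square-summable, depends continuously and linearly on~$T$, vanishes precisely when~$T$ is weakly compact, and covers all of~$\ell_2$. Once $\sigma$ is in place, set
\[ \psi(T) = \sigma\bigl(T-\chi(T)I\bigr) + \chi(T)\cdot 1 \qquad (T\in\mathscr{B}(E_{\text{R}})); \]
a direct computation using the zero product on $\ell_2\subseteq\ell_2^{\sim}$ and Theorem~\ref{readMainThm}\romanref{readMainThm3} (which forces $\sigma(ST)=0$ for $S,T\in\mathscr{I}$) shows that $\psi$ is multiplicative, while continuity and surjectivity are inherited from $\chi$ and~$\sigma$.

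Then construct the splitting $\rho\colon\ell_2^{\sim}\to\mathscr{B}(E_{\text{R}})$ by sending $1\mapsto I$ and, for each $\xi\in\ell_2$, choosing an operator $T_\xi\in\mathscr{I}$ with $\sigma(T_\xi) = \xi$ in such a way that $\xi\mapsto T_\xi$ is linear and bounded. Read's construction already contains explicit operators on $E_{\text{R}}$ corresponding to standard basis vectors of~$\ell_2$; forming $\ell_2$-linear combinations of them in operator norm requires absolute summability of a suitably rescaled family, which I would verify from Read's norm estimates. Multiplicativity of~$\rho$ and the identity $\psi\circ\rho = \text{id}_{\ell_2^{\sim}}$ then both follow: the first from $T_\xi T_\eta\in\mathscr{W}(E_{\text{R}})$ via Theorem~\ref{readMainThm}\romanref{readMainThm3}, the second by construction.

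The main obstacle lies in steps two and three combined: Read's original argument yields only \emph{countably} many operators witnessing the infinite codimension of~$\mathscr{W}(E_{\text{R}})$ in~$\mathscr{B}(E_{\text{R}})$, whereas here I need a genuine bounded linear isomorphism $\mathscr{I}/\mathscr{W}(E_{\text{R}})\cong\ell_2$ together with a continuous linear section. The delicate point is matching the $\ell_2$-norm on scalar sequences with the quotient operator norm on cosets, which requires \emph{two-sided} estimates not immediate from Read's qualitative statements; this is where the geometry of~$E_{\text{R}}$---specifically, the interaction between its distinguished $\ell_2$-subspaces and arbitrary bounded operators---must be re-examined, and this step will carry most of the new technical weight of the proof.
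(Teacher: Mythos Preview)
There is a genuine gap in your argument for the multiplicativity of the section~$\rho$. You claim it follows from $T_\xi T_\eta \in \mathscr{W}(E_{\text{R}})$ via Theorem~\ref{readMainThm}\romanref{readMainThm3}, but this is not enough: since~$\ell_2$ carries the zero product, multiplicativity of~$\rho$ demands $\rho(\xi)\rho(\eta) = \rho(\xi\eta) = \rho(0) = 0$, that is, $T_\xi T_\eta = 0$ \emph{as an operator on}~$E_{\text{R}}$, not merely modulo~$\mathscr{W}(E_{\text{R}})$. Knowing only that the product is weakly compact yields a homomorphism into the quotient $\mathscr{B}(E_{\text{R}})/\mathscr{W}(E_{\text{R}})$, which is exactly what~$\psi$ already gives and not a splitting. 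There is no general mechanism for correcting the~$T_\xi$ by weakly compact perturbations so that all products vanish; this has to be built into the explicit choice of~$T_\xi$ from the start.

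The paper avoids this trap, and also simplifies your construction of~$\psi$, by proceeding differently on both fronts. Rather than assembling~$\psi$ from a character~$\chi$ and a linear map~$\sigma$ and then invoking Theorem~\ref{readMainThm}\romanref{readMainThm3} for multiplicativity, it defines~$\psi$ as a composite of algebra homomorphisms: the canonical map $\Theta_0\colon\mathscr{B}(E_{\text{R}})\to\mathscr{B}(E_{\text{R}}^{**}/E_{\text{R}})$ induced by biduals (whose kernel is automatically~$\mathscr{W}(E_{\text{R}})$), conjugation by an explicit isomorphism $U\colon H\to E_{\text{R}}^{**}/E_{\text{R}}$ onto a concrete Hilbert space, and the inverse of an identification of~$\ell_2^{\sim}$ with the image subalgebra of~$\mathscr{B}(H)$ coming from Read's matrix form~\eqref{ReadEq411}. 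For the section, the paper writes down an explicit formula for operators~$T_\xi$ on the pre-quotient space~$Y$, linear and bounded in~$\xi$, and verifies by direct inspection that $T_\eta T_\xi = 0$ identically: the range of~$T_\xi$ is supported on coordinates that~$T_\eta$ reads as zero. Your outline would converge to the paper's argument once you recognise that the explicit operators from Read's Lemma~4.2 can be arranged to have literally vanishing products, but as written the appeal to Theorem~\ref{readMainThm}\romanref{readMainThm3} at this step is an error, not just a missing detail.
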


Taking $\mathscr{I}$ to be the preimage under~$\psi$ of the
codimension-one ideal~$\ell_2$ of~$\ell_2^{\sim}$, we see that
clauses~\romanref{readMainThm1}--\romanref{readMainThm3} of
Theorem~\ref{readMainThm} are satisfied, so
that Theorem~\ref{readMainThm} follows from Theorem~\ref{WEBEsplitexact},
as claimed above.  To explain how Theorem~\ref{WEBEsplitexact} 
incorporates and strengthens the key technical lemmas in Read's
construction, let us first describe the latter in more detail.

After defining the Banach space~$E_{\normalfont{\text{R}}}$, Read
begins his study of it by showing that the
quotient~$E_{\normalfont{\text{R}}}^{**}/E_{\normalfont{\text{R}}}$ is
isomorphic to a Hilbert space~$H$, and he then identifies a particular
ortho\-normal basis $(a_n)_{n\in\N}$ for this Hilbert space (see
\cite[equation~(3.6.5) and Lemma~3.7]{read}).  Given an operator
\mbox{$T\in\mathscr{B}(E_{\normalfont{\text{R}}})$}, its
bidual~$T^{**}$ induces an operator~$\Theta_0(T)$
on~$E_{\normalfont{\text{R}}}^{**}/E_{\normalfont{\text{R}}}$, and
hence on~$H$, by a standard construction, as detailed in the
diagram~\eqref{defnTheta0} below. Read's main technical achievement is
\cite[Lemma~4.1]{read}, which states that when~$\Theta_0(T)$ is
considered as an operator on~$H$, its matrix~$M$ with respect to the
orthonormal basis $(a_n)_{n\in\N}$ has a very special form, namely
\begin{equation}\label{ReadEq411}
M = \begin{pmatrix} \lambda & 0 & 0 & \cdots\\
\mu_1 & \lambda & 0 & 0 & \cdots\\
\mu_1 & 0 & \lambda & 0 & 0 & \cdots\\
\mu_2 & 0 & 0 & \lambda & 0 & 0 & \cdots\\
\mu_2 & 0 & 0 & 0 & \lambda & 0 & 0 & \cdots\\
\mu_3 & 0 & 0 & 0 & 0 & \lambda & 0 & 0 & \cdots\\
\mu_3 & 0 & 0 & 0 & 0 & 0 & \lambda & 0 & 0 & \cdots\\
\vdots & \vdots & \vdots & \vdots & \vdots & \vdots & & \ddots
\end{pmatrix}
\end{equation}
for some scalars $\lambda$ and $\mu_1,\mu_2,\mu_3,\ldots$\, (Note
that~\eqref{ReadEq411} corrects a typo in
\cite[equation~(4.1.1)]{read}: the first entry of the fifth row of~$M$
should be~$\mu_2$, not~$\mu_3$, as \cite[Lemma~4.1(c)]{read} shows.)
Finally, in \cite[Lemma~4.2]{read}, Read establishes a partial
converse to this result by showing that, in the case where only
finitely many of the scalars $\mu_1,\mu_2,\mu_3,\ldots$ are non-zero,
\eqref{ReadEq411} arises as the matrix of~$\Theta_0(T)$ for some
operator $T\in\mathscr{B}(E_{\normalfont{\text{R}}})$.

The connection between these results and Theorem~\ref{WEBEsplitexact}
goes via the following ob\-ser\-va\-tion. The first column of the
matrix~\eqref{ReadEq411} is the image of the first basis vector under
the operator~$\Theta_0(T)$, so that the sequence~$(\mu_i)_{i\in\N}$
belongs to~$\ell_2$. Hence we can define a mapping
$\psi\colon\mathscr{B}(E_{\normalfont{\text{R}}})\to\ell_2^{\sim}$ by
$\psi(T) = (\mu_i)_{i\in\N}+\lambda 1$, where $\lambda$ and
$\mu_1,\mu_2,\ldots$ are the scalars determined by~$\Theta_0(T)$
via~\eqref{ReadEq411}. We shall show that this mapping~$\psi$ is the
surjective homomorphism whose existence is stated in
Theorem~\ref{WEBEsplitexact}, which therefore strengthens Read's
technical lemmas in two ways. First, the surjectivity of~$\psi$ means
that all possible matrices of the form~\eqref{ReadEq411} arise as the
matrix of~$\Theta_0(T)$ for some operator
$T\in\mathscr{B}(E_{\normalfont{\text{R}}})$, not just those whose
first column vanishes eventually. Second, we can choose a
pre\-image~$T$ under~$\psi$ of the element $(\mu_i)_{i\in\N}+\lambda
1\in\ell_2^{\sim}$ in such a way that the corresponding mapping is a
bounded homomorphism.

Our motivation for proving Theorem~\ref{WEBEsplitexact} is that it
has enabled us to show that:
\begin{itemize}
\item the Banach algebra~$\mathscr{B}(E_{\text{R}})$ has a singular
  extension which splits algebraically, but it is not admissible, and
  so does not split strong\-ly;
\item the homological bidimension of~$\mathscr{B}(E_{\text{R}})$ is at
  least two.
\end{itemize}
The first of these results solves a natural problem left open in Bade,
Dales, and Lykova's comprehensive study~\cite{bdl} of splittings of
extensions of Banach algebras, while the second answers a question
originating in Helemskii's seminar at Moscow State University. For
details of any unexplained terminology and how to deduce these results
from Theorem~\ref{WEBEsplitexact}, we refer to~\cite{NJLRS}.

\begin{noteadded} After the completion of~\cite{NJLRS}, in
joint work with Kania~\cite{KLS}, we have succeeded in constructing a
singular extension of~$\mathscr{B}(E_{\text{R}})$ which is admissible
and splits algebraically, but does not split strong\-ly, thus
complementing the first of the two results mentioned above. Once
again, Theorem~\ref{WEBEsplitexact} plays a key role in the proof of
this result.
\end{noteadded}

To conclude this Introduction, we would like to express the hope that,
by elucidating the structure of the Banach
algebra~$\mathscr{B}(E_{\text{R}})$, Theorem~\ref{WEBEsplitexact} will
spark new interest in and lead to further applications of this
remarkable creation of Read's.

\section{Read's Banach space~$E_{\normalfont{\text{R}}}$ and the proof of
  Theorem~{\normalfont{\ref{WEBEsplitexact}}}}\label{section3}
\noindent
We begin this section with some general conventions and results,
followed by an overview of Read's construction of the Banach
space~$E_{\normalfont{\text{R}}}$ and details of the specific elements
of Read's work that we shall require in our proof of
Theorem~\ref{WEBEsplitexact}, before we conclude with the proof
itself. Our aim is to provide enough detail to make this presentation
self-contained, without repeating arguments already given
in~\cite{read}.  An expanded, entirely self-contained version of the
proof of Theorem~\ref{WEBEsplitexact}, incorporating all necessary
details of Read's construction, can be found
in~\cite[Chapter~5]{skillicorn}.

We shall generally follow the notation and terminology used
in~\cite{read}; in some places, however, we add extra details or take
a slightly different view from Read's in order to facilitate our proof
of Theorem~\ref{WEBEsplitexact} and avoid ambiguities.

All results in~\cite{read} are stated for complex scalars only. We
observe that the proofs carry over verbatim to the real case, so we
shall address both cases simultaneously, denoting the scalar field
by~$\K$, so that either $\mathbb{K} = \R$ or $\mathbb{K} = \C$.

By an \emph{operator,} we understand a bounded, linear mapping between
Banach spaces. For a Banach space~$E$, we denote by~$E^*$ its dual
space, and we identify~$E$ with its canonical image in the bidual
space~$E^{**}$. Let $\pi_E\colon E^{**}\to E^{**}/E$ denote the
quotient mapping. Since the restriction to~$E$ of the bidual~$T^{**}$
of an operator~$T\in\mathscr{B}(E)$ is equal to~$T$, it leaves the
subspace~$E$ invariant, and hence the Fundamental Isomorphism Theorem
implies that there is a unique operator
$\Theta_0(T)\in\mathscr{B}(E^{**}/E)$ such that the diagram
\begin{equation}\label{defnTheta0}
\spreaddiagramcolumns{6ex}\spreaddiagramrows{2ex}\xymatrix{%
E^{**}\ar^-{\displaystyle{T^{**}}}[r]\ar_-{\displaystyle{\pi_E}}[d] &
E^{**}\ar^-{\displaystyle{\pi_E}}[d]\\
E^{**}/E\ar@{-->}^-{\displaystyle{\Theta_0(T)}}[r] & E^{**}/E}
\end{equation}
is commutative. Moreover, the mapping $\Theta_0\colon
T\mapsto\Theta_0(T),\,\mathscr{B}(E)\to\mathscr{B}(E^{**}/E)$, is a
continuous, unital algebra homomorphism of norm at most one, and a standard
characterization of the ideal of weakly compact operators shows that
$\ker\Theta_0 = \mathscr{W}(E)$.

Let~$(A,\|\cdot\|_A)$ be a Banach space with a normalized, symmetric
basis~$(e_n)_{n\in\N}$. By passing to an equivalent
norm, we may suppose that the basis $(e_n)_{n\in\N}$ is
$1$-symmetric, that is, for each $n\in\N$, each
permutation~$\pi$ of~$\N$, and all scalars
$\lambda_1,\ldots,\lambda_n,\mu_1,\ldots,\mu_n$, we have
\[
\biggl\| \sum_{j=1}^n\lambda_j\mu_j e_{\pi(j)}\biggr\|_A\le\max\bigl\{
|\mu_1|,\ldots,|\mu_n|\bigr\}\biggl\| \sum_{j=1}^n\lambda_j
e_j\biggr\|_A. \] Read \cite[Definition~1.2(b)]{read} defines
the \emph{James-like space}~$JA$ based on~$A$ as the collection of all
scalar sequences $x = (\lambda_n)_{n\in\N}\in c_0$ such that the
quantity
\begin{multline*} \|x\|_{JA} = 
\sup\Biggl\{\biggl\|\sum_{j=1}^n(\lambda_{p_j}-\lambda_{p_{j+1}})^2e_j
+ \lambda_{p_{n+1}}^2e_{n+1}\biggr\|_A^{\frac{1}{2}}
:\\[-2ex] n,p_1,\ldots,p_{n+1}\in\N,\, p_1<p_2<\cdots<p_{n+1}\Biggr\}
\end{multline*}
is finite, and observes that $(JA,\|\cdot\|_{JA})$ is a Banach space. 

In~\cite[Section~2]{read}, Read proceeds to show that, in the case
where~$A$ contains no subspace isomorphic to~$c_0$, the space~$JA$ is
quasi-reflexive of order one, so that the canonical image of~$JA$ has
codimension one in its bidual~$(JA)^{**}$, just like the original
James space (which in this approach corresponds to $A =
\ell_1$). Moreover, the standard unit vector basis~$(e_n)_{n\in\N}$ is
a shrinking basis for~$JA$, so that the sequence $(e_n^*)_{n\in\N}$ of
coordinate functionals is a basis for the dual space~$(JA)^*$.  Like
Read, we use $(e_n)_{n\in\N}$ to denote the basis of both~$A$
and~$JA$, and we identify~$(JA)^{**}$ with~$JA\oplus\mathbb{K}\Phi$,
where~$\Phi$ is the functional on~$(JA)^*$ given by $\langle
e_n^*,\Phi\rangle = 1$ for each $n\in\N$. We record for later use that
$\|\Phi\|_{(JA)^{**}} =1$ by \cite[Lemma~2.2(a)]{read}.

Let $\N_0 = \N\cup\{0\}$ be the set of non-negative integers, and set
\[ \mathbb{I} = \{2\}\cup\{i\in\N_0 : i\equiv 0\bmod 6,\, i\equiv 
4\bmod 6,\, \text{or}\ i\equiv 5\bmod 6\}. \] For each
$i\in\mathbb{I}$, choose a Banach space~$(B_i,\|\cdot\|_{B_i})$ with a
normalized, $1$-symmetric basis, which we denote by $(e_n)_{n\in\N}$
(independent of~$i$), such that no subspace of~$B_i$ is isomorphic
to~$c_0$, and such that the family $(B_i)_{i\in\mathbb{I}}$ is
\emph{incomparable}, in the sense that, for each $i\in\mathbb{I}$ and
each $\epsilon>0$, there exists $n\in\N$ with
\begin{equation}\label{ReadDefIncomp}
\|\sigma_n\|_{B_i}\le\epsilon\cdot\inf\bigl\{\|\sigma_n\|_{B_j} :
j\in\mathbb{I}\setminus\{i\}\bigr\}, \end{equation} where $\sigma_n = \sum_{k=1}^n
e_k$; Read explains in the note following \cite[Definition~3.2]{read}
that such a family exists. For $i\in\N\setminus\mathbb{I}$, set
\begin{equation}\label{subspaceYiforinotinI} 
 B_i = \begin{cases} B_0 & \text{if}\ i\equiv 1\bmod 6\\ B_2 &
   \text{if}\ i\equiv 2\bmod 6\ \text{or}\ i\equiv 3\bmod
   6, \end{cases} \end{equation} so that $B_i$ is now defined for each
$i\in\N_0$, and we have a sequence $(JB_i)_{i\in\N_0}$ of
James-like Banach spaces, whose $\ell_2$-direct sum $Y =
\bigl(\bigoplus_{i=0}^\infty JB_i\bigr)_{\ell_2}$ is at the heart of
Read's construction.  By standard duality and the quasi-reflexivity of
the spaces~$JB_i$, we have the following identifications:
\begin{equation}\label{dualandbidualofY}
Y^* = \biggl(\bigoplus_{i=0}^\infty (JB_i)^*\biggr)_{\ell_2}\qquad
\text{and}\qquad Y^{**} = \biggl(\bigoplus_{i=0}^\infty
(JB_i)^{**}\biggr)_{\ell_2} = \biggl(\bigoplus_{i=0}^\infty
JB_i\oplus\mathbb{K}\Phi\biggr)_{\ell_2}.
\end{equation}

Another important piece of notation involves the Hilbert
space~$\overline{B} = \ell_2(\N_0)$ and its standard orthonormal
basis~$(b_n)_{n\in\N_0}$, which Read relabels as follows for
positive indices:
\begin{equation}\label{alphabetagammabasis} 
\begin{alignedat}{3} \alpha_n &= b_{6n},\qquad & \beta_n &=
  b_{6(n-1)+1},\qquad & \gamma_n &= b_{6(n-1)+2},\\ \delta_n &=
  b_{6(n-1)+3},\qquad & x_n &= b_{6(n-1)+4},\qquad & y_n &=
  b_{6(n-1)+5} \end{alignedat} \qquad\quad (n\in\N). \end{equation}
For $n\in\N$ and $\xi = \sum_{i=0}^\infty\xi_i b_i\in\overline{B}$,
Read introduces the tensor notation $e_n\otimes\xi =
(\xi_ie_n)_{i=0}^\infty$, which defines an element of~$Y$ due to the
uniform bound $\|e_n\|_{JB_i}\le\|e_1+e_2\|_{B_i}\le 2$, independent
of $i\in\N_0$. This definition extends by linearity to tensors of the
form $x\otimes\xi$ for $x\in c_{00}$.  By assigning special symbols
to the following linear combinations of the basis
vectors~\eqref{alphabetagammabasis},
\begin{equation}\label{dashcombs}
\begin{alignedat}{3}
\alpha_n' &= \alpha_n - (x_n - y_n),\qquad & \beta_n' &= \beta_n -
(x_n + y_n),\\ \gamma_n' &= \gamma_n - (x_n + y_n),\qquad & \delta_n'
&= \delta_n - \Bigl(\frac{b_0}{2^n} - x_n+
y_n\Bigr) \end{alignedat}\qquad\quad (n\in\N), \end{equation} we can
now define Read's space~$E_{\text{R}}$ as follows:
\begin{equation}\label{defnSVN}
\begin{alignedat}{2}
S &= \{\alpha_n', \beta_n', \gamma_n', \delta_n' :
n\in\N\},\qquad\qquad & V &= \clspa S\subseteq\overline{B},\\ N &=
\clspa\{e_n\otimes s: n\in\N,\,s\in S\}\subseteq Y,\qquad\qquad &
E_{\text{R}} &= Y/N.
\end{alignedat}
\end{equation}

\noindent%
\begin{note} Our definition of~$\beta_n'$ above corrects a typo in
\cite[Definition~3.4(b)]{read}, where the sign of~$y_n$ is wrong, as
one can see by comparing it with the second line of the displayed
equations at the bottom of \cite[p.~313]{read} and the seventh
displayed equation of \cite[p.~319]{read}.
\end{note}

As we stated in the Introduction, the first two steps in Read's analysis
of the space~$E_{\text{R}}$ consist of showing that the
quotient~$E_{\text{R}}^{**}/E_{\text{R}}$ is isomorphic to a Hilbert
space and identifying an orthonormal basis for it. Read, however, does
not obtain an explicit formula for this isomorphism. Such a formula
will be required in our proof of Theorem~\ref{WEBEsplitexact}, so we
shall now recast Read's arguments in a form that will produce an
explicit isomorphism~$U$ between the closed subspace
\begin{equation}\label{defnH} H = \clspa\{b_0,x_n,y_n :
n\in\N\} \end{equation} of~$\overline{B}$ 
and~$E_{\text{R}}^{**}/E_{\text{R}}$.

To this end, we observe that the proof of \cite[Lemma~3.7]{read} shows
that the restriction to~$H$ of the quotient mapping
$Q_V\colon\overline{B}\to\overline{B}/V$ is surjective and bounded
below by~$\frac{1}{15}$; that is, $Q_V|_H$ is an isomorphism whose
inverse has norm at most~$15$. (Read denotes this mapping by~$\alpha$;
we prefer~$Q_V|_H$ as it is more descriptive and avoids any possible
confusion with the basis vectors $\alpha_n$.)

Set $U_0 = \pi_{E_{\text{R}}}Q_N^{**}R_0\colon \overline{B}\to
E^{**}_{\text{R}}/E_{\text{R}}$, where \mbox{$\pi_{E_{\text{R}}}\colon
  E^{**}_{\text{R}}\to E^{**}_{\text{R}}/E_{\text{R}}$} and $Q_N\colon
Y\to E_{\text{R}}$ are the quotient mappings, and
$R_0\colon\overline{B}\to Y^{**}$ is the linear isometry given by
\[ R_0\xi = \Phi\otimes\xi = (\xi_i\Phi)_{i=0}^\infty\qquad \biggl(\xi =
\sum_{i=0}^\infty\xi_i b_i\in\overline{B}\biggr), \] using the natural
extension to~$Y^{**}$ of Read's tensor notation for~$Y$ introduced
above. We claim that~$U_0$ is surjective with $\ker U_0 = V$.  To
establish this claim, we first note that $\ker Q_N^{**} =
N^{\circ\circ}$ (the bipolar of~$N$) by a standard duality result, and
therefore \begin{equation}\label{kerpiQN} \ker
  (\pi_{E_{\text{R}}}Q_N^{**}) = Y+N^{\circ\circ}.
\end{equation}
Equation~\eqref{dualandbidualofY} shows that $Y^{**} =
Y+R_0[\overline{B}]$. Combining this with the surjectivity
of~$\pi_{E_{\text{R}}}Q_N^{**}$ and~\eqref{kerpiQN}, we obtain
$E_{\text{R}}^{**}/E_{\text{R}} = \pi_{E_{\text{R}}}Q_N^{**}[Y^{**}] =
U_0[\overline{B}]$, so that~$U_0$ is indeed surjective.

Next, to determine the kernel of~$U_0$, we require Read's observation
\cite[equation~(3.6.3)]{read} that each element~$x^{**}$ of~$Y^{**}$
can be expressed uniquely as $x^{**} = \Phi\otimes\eta_0 +
\sum_{i=1}^\infty e_i\otimes\eta_i$, where $\eta_i\in\overline{B}$
for each $i\in\N_0$.  Using this notation, Read
\cite[Lemma~3.6.4]{read} shows that
\begin{equation}\label{bipolarN} 
  N^{\circ\circ} = \{ x^{**}:
\eta_i\in V\ \text{for each}\ i\in\N_0\}, \end{equation}
which together with~\eqref{kerpiQN}
immediately implies that $V\subseteq\ker U_0$.  

Conversely, suppose that $\xi\in\ker U_0$. Then $\Phi\otimes\xi\in\ker
(\pi_{E_{\text{R}}}Q_N^{**})$, so by~\eqref{kerpiQN}, we can find $y =
\sum_{i=1}^\infty e_i\otimes\eta_i\in Y$ such that $\Phi\otimes\xi -
y\in N^{\circ\circ}$, and hence $\xi\in V$ by~\eqref{bipolarN} (as
well as $\eta_i\in V$ for each $i\in\N$, but we do not need this
information). This proves our claim.

By the Fundamental Isomorphism Theorem, there is a unique
isomorphism~$\widehat{U}_0$ of~$\overline{B}/V$
onto~$E_{\text{R}}^{**}/E_{\text{R}}$ such that $U_0 =
\widehat{U}_0Q_V$, and hence we have a commutative diagram
\begin{equation}\label{commdiagramRead} 
\spreaddiagramcolumns{7ex}\spreaddiagramrows{3.5ex}\xymatrix{%
  H\ar^-{\displaystyle{Q_V|_H}}_-{{\cong}}[r]%
  \ar@{^{(}->}^-{\displaystyle{\iota}}[d] & \overline{B}/V%
  \ar@{-->}_-{{\cong}}^-{\displaystyle{\widehat{U}_0}}[r] &
      E_{\text{R}}^{**}/E_{\text{R}}\\ 
  \overline{B}\ar@{->>}^-{\displaystyle{Q_V}}[ru]%
      \ar^-{\displaystyle{U_0}}[rru]\ar^(0.7){\displaystyle{R_0}}[r]
      & Y^{**}\ar@{->>}^-{\displaystyle{Q_N^{**}}}[r]%
      & E_{\text{R}}^{**}\smashw{,}%
  \ar@{->>}_(0.4){\displaystyle{\pi_{E_{\text{R}}}}}[u]}
\end{equation}
where $\iota\colon H\to\overline{B}$ denotes the
natural inclusion. 

This diagram shows in particular that the restriction~$U$ of the
operator~$U_0$ to the subspace~$H$ is an isomorphism
onto~$E_{\text{R}}^{**}/E_{\text{R}}$, and so it induces a continuous
algebra isomorphism $\ad U\colon T\mapsto U^{-1}TU$ of the Banach
algebra~$\mathscr{B}(E_{\text{R}}^{**}/E_{\text{R}})$
onto~$\mathscr{B}(H)$. Using this notation, we can rephrase
\cite[Lemma~4.1]{read} as follows: for each
$T\in\mathscr{B}(E_{\text{R}})$, the matrix~$M$ of the operator $(\ad
U)\circ\Theta_0(T)$ with respect to the orthonormal basis
$(a_n)_{n\in\N} = (b_0,x_n,y_n)_{n\in\N}$ for~$H$ is given
by~\eqref{ReadEq411}. Alternatively, we can express this identity as
\begin{equation}\label{rangeofadUTheta0new}
(\ad U)\circ\Theta_0(T) = \lambda I_H + \tau_\xi,
\end{equation}
where~$I_H$ is the identity operator on~$H$, $\xi =
\sum_{n=1}^\infty\mu_n(x_n+y_n)$, and~$\tau_\xi$ is the rank-one
operator on~$H$ given by $\eta\mapsto(\eta\,|\,b_0)\xi$, where
$(\eta\,|\,b_0)$ denotes the inner product of~$\eta$ and~$b_0$. We
note in passing that the fact that the spaces $(B_i)_{i\in\mathbb{I}}$
are chosen to be incomparable in the sense of~\eqref{ReadDefIncomp}
plays a crucial role in the proof of \cite[Lemma~4.1]{read}. 

We are now ready to prove Theorem~\ref{WEBEsplitexact}. Our proof
refines that of \cite[Lemma~4.2]{read}, as it is given in
\cite[p.~320]{read}. For clarity, we shall present a fully
self-contained argument.

\begin{proof}[Proof of Theorem~{\normalfont{\ref{WEBEsplitexact}}}] 
For notational convenience, we shall replace the generic Hilbert
space~$\ell_2$ in the statement of Theorem~\ref{WEBEsplitexact}
with the closed subspace
\[ H_0 = \clspa\{x_n+y_n : n\in\N\} \] 
of the Hilbert space~$H$ given by~\eqref{defnH}.  The space~$H_0$ is
of course isometrically isomorphic to~$\ell_2$ via the mapping
$\xi\mapsto \bigl(\frac{1}{\sqrt{2}}(\xi\,|\,x_n+y_n)\bigr)_{n\in\N}$,
so this is really only a change in notation, provided that we
endow~$H_0$ with the zero product. Then, using that the vector~$b_0$ is
orthogonal to~$H_0$, we see that the mapping $\Upsilon\colon \xi
+\lambda1\mapsto\tau_\xi +\lambda I_H$ is a continuous, unital algebra
isomorphism of the unitization $\widetilde{H}_0 = H_0\oplus\K1$
of~$H_0$ (defined analogously to that of~$\ell_2$ given on
page~\pageref{WEBEsplitexact}) onto the closed subalgebra $\mathscr{T} =
\{\tau_\xi +\lambda I_H : \xi\in H_0,\, \lambda\in\K\}$
of~$\mathscr{B}(H)$.

By~\eqref{rangeofadUTheta0new}, the range of $(\ad U)\circ\Theta_0$ is
contained in~$\mathscr{T}$, so we may consider the composite
continuous algebra homomorphism~$\psi$ given by \[ \psi =
\Upsilon^{-1}\circ(\ad U)\circ\Theta_0\colon
\mathscr{B}(E_{\text{R}})\to\widetilde{H}_0. \] Since $\Upsilon^{-1}$
and $\ad U$ are both isomorphisms, we see that $\ker\psi =
\ker\Theta_0 = \mathscr{W}(E_{\text{R}})$. It remains to construct a
continuous algebra homomorphism which is a right inverse of~$\psi$.

Let $\xi = \sum_{n=1}^\infty\xi_n(x_n+y_n)\in H_0$ and $y =
(y(i))_{i=0}^\infty\in Y$ be given, where \mbox{$y(i)\in JB_i$} for
each $i\in\N_0$. By~\eqref{subspaceYiforinotinI}, we
have $JB_{6(n-1)+1} = JB_0$ for each $n\in\N$, so that in analogy with
the tensor notation already introduced, we may
define~$y(0)\otimes\beta_n$ to be the element of~$Y$ whose
$(6(n-1)+1)^{\text{st}}$ coordinate is~$y(0)$, while all other
co\-or\-di\-nates vanish. Then the series
$\sum_{n=1}^\infty\xi_n\,y(0)\otimes\beta_n$ converges in~$Y$, and its
sum has norm
$\frac{1}{\sqrt{2}}\|\xi\|_{\overline{B}}\,\|y(0)\|_{JB_0}$.

Using~\eqref{subspaceYiforinotinI} once more, we see that
$JB_{6(n-1)+2} = JB_{6(n-1)+3} = JB_2$ for each \mbox{$n\in\N$}, so
that we have an absolutely convergent series \[ \sum_{n=1}^\infty
\frac{y(6(n-1)+3)}{2^n} \] in~$JB_2$, whose sum~$y'$ has norm at
most~$\|y\|_Y$. As above, let $y'\otimes\gamma_n$ be the element
of~$Y$ whose \mbox{$(6(n-1)+2)^{\text{nd}}$} coordinate is~$y'$, while
all other co\-or\-di\-nates vanish. Then the series
$\sum_{n=1}^\infty\xi_n\, y'\otimes\gamma_n$ converges in~$Y$, and its
sum has norm at most
$\frac{1}{\sqrt{2}}\|\xi\|_{\overline{B}}\,\|y\|_Y$.

Combining these conclusions, we may define an element of~$Y$ by
\begin{equation}\label{defnTxi}
 T_\xi y = \sum_{n=1}^\infty\xi_n\biggl(y(0)\otimes\beta_n +
 \biggl(\sum_{m=1}^\infty\frac{y(6(m-1)+3)}{2^m}\biggr)\otimes\gamma_n\biggr),
\end{equation}
and $\|T_\xi y\|_Y\le\sqrt{2}\,\|\xi\|_{\overline{B}}\,\|y\|_Y$.  The
mapping $T_\xi\colon y\mapsto T_\xi y$ is clearly linear and has norm
at most~$\sqrt{2}\,\|\xi\|_{\overline{B}}$, and hence we have a
mapping \begin{equation}\label{defnrho0} \rho_0\colon\ \xi\mapsto
  T_\xi,\quad H_0\to\mathscr{B}(Y), \end{equation} which is linear and
bounded with norm at most~$\sqrt{2}$.  

To prove that~$\rho_0$ is multiplicative, we must show that $T_\eta
T_\xi = 0$ for each pair $\xi,\eta\in H_0$ because~$H_0$ has the zero
product. Write~$\xi$ as \mbox{$\xi =
  \sum_{n=1}^\infty\xi_n(x_n+y_n)$}, and let \mbox{$y =
  (y(i))_{i=0}^\infty\in Y$} be given. By~\eqref{defnTxi}, the element
$z = T_\xi y$ has the form $z = (z(i))_{i=0}^\infty$, where
\[ z(i) = \begin{cases} \xi_n y(0)\ &\text{if}\ i =
  6(n-1)+1\ \text{for
    some}\ n\in\N,\\ 
\displaystyle{\xi_n\sum_{m=1}^\infty\frac{y(6(m-1)+3)}{2^m}}\ &\text{if}\ i
  = 6(n-1)+2\ \text{for
    some}\ n\in\N,\\ 0\ &\text{otherwise.} \end{cases} \] Hence $z(0) = 0 = z(6(m-1)+3)$ for each
$m\in\N$, so that another application of~\eqref{defnTxi} shows that $0
= T_\eta z = T_\eta T_\xi y$, as required.

Next, we shall prove that  
\begin{equation}\label{TxiNinvariant}
T_\xi[N]\subseteq N\qquad (\xi\in H_0),
\end{equation}
where~$N$ is the subspace of~$Y$ given by~\eqref{defnSVN}.
Since~$T_\xi$ is bounded and linear, it suffices to show that
$T_\xi(e_n\otimes s)\in N$ for each $n\in\N$ and $s\in S$.
Comparing~\eqref{defnTxi} with~\eqref{alphabetagammabasis}, we see
that $T_\xi(e_n\otimes\eta) = 0$ for each $\eta\in\{\alpha_m, \beta_m,
\gamma_m, x_m, y_m:m\in\N\}$, and therefore $T_\xi(e_n\otimes s) =
0\in N$
for each $s\in\{\alpha_m',\beta_m',\gamma_m':m\in\N\}$
by~\eqref{dashcombs}. Moreover, for $m\in\N$, we have
\begin{align*} T_\xi(e_n\otimes\delta_m') &= T_\xi(e_n\otimes\delta_m) -
\frac{1}{2^m}T_\xi(e_n\otimes b_0) =
\sum_{k=1}^\infty\xi_k\,\frac{e_n}{2^m}\otimes\gamma_k -
\frac{1}{2^m}\sum_{k=1}^\infty\xi_k\, e_n\otimes\beta_k\\ &=
\frac{1}{2^m}\sum_{k=1}^\infty\xi_k\,e_n \otimes(\gamma_k - \beta_k) =
\frac{1}{2^m}\sum_{k=1}^\infty\xi_k\,e_n\otimes(\gamma_k' -
\beta_k')\in N, \end{align*} which completes the proof
of~\eqref{TxiNinvariant}.

Thus, by the Fundamental Isomorphism Theorem, there is a unique
operator \mbox{$\widecheck{T}_\xi\in\mathscr{B}(E_{\text{R}})$} such that
the diagram
\begin{equation}\label{defnTcheck}
\spreaddiagramcolumns{5ex}\spreaddiagramrows{1.5ex}\xymatrix{%
  Y\ar^-{\displaystyle{T_\xi}}[r]\ar_-{\displaystyle{Q_N}}[d] &
  Y\ar^-{\displaystyle{Q_N}}[d]\\ 
  E_{\text{R}}\ar@{-->}^-{\displaystyle{\widecheck{T}_\xi}}[r]
  & E_{\text{R}}}
\end{equation}
is commutative, and $\|\widecheck{T}_\xi\| =
\|Q_NT_\xi\|\le\sqrt{2}\,\|\xi\|_{\overline{B}}$. The fact that the
mapping~$\rho_0$ given by~\eqref{defnrho0} is an algebra
homomorphism implies that the same is true for the mapping
$\xi\mapsto\widecheck{T}_\xi,\, H_0\to\mathscr{B}(E_{\text{R}})$, and
hence
\[ \rho\colon\ \xi+\lambda 1\mapsto \widecheck{T}_\xi + \lambda
I_{E_{\text{R}}},\quad\widetilde{H}_0\to\mathscr{B}(E_{\text{R}}), \]
is a continuous, unital algebra homomorphism, where~$I_{E_{\text{R}}}$
denotes the identity operator on~$E_{\text{R}}$.

We shall now complete the proof by showing that this
homomorphism~$\rho$ is a right
inverse of~$\psi$. Since~$\psi$ and~$\rho$ are both unital, it suffices
to show that $\psi\circ\rho(\xi) = \xi$ for each $\xi\in H_0$, which
amounts to showing that $(\ad U)\circ\Theta_0(\widecheck{T}_\xi) =
\tau_\xi$ by the definitions of~$\rho$, $\psi$, and~$\Upsilon$.
According to~\eqref{rangeofadUTheta0new}, both sides of this identity
belong to~$\mathscr{T}$, so the fact (to be established below) that
\begin{equation}\label{evalatbsuffices}
T = T'\quad \Longleftrightarrow\quad Tb_0 = T'b_0\qquad
(T,T'\in\mathscr{T})
\end{equation}
means that it is enough to verify that $((\ad
U)\circ\Theta_0(\widecheck{T}_\xi))b_0 = \tau_\xi(b_0)$, or, equivalently,
that
\begin{equation}\label{Uxi}
\Theta_0(\widecheck{T}_\xi)Ub_0 = U\xi.
\end{equation}

To prove~\eqref{evalatbsuffices}, let $T = \tau_\eta + \lambda I_H
\in\mathscr{T}$ be given, where $\eta\in H_0$ and
$\lambda\in\mathbb{K}$. Then we have $Tb_0 = \eta + \lambda b_0$,
which uniquely determines both~$\eta$ and~$\lambda$, and hence~$T$,
because~$b_0$ is orthogonal to~$H_0\ni\eta$. This establishes the
implication~$\Leftarrow$, while the converse is clear.

We begin our proof of~\eqref{Uxi} by observing that $\sigma_m =
\sum_{j=1}^m e_j$ is a unit vector in~$JB_i$ for each $m\in\N$ and
$i\in\N_0$, and hence $(\sigma_m\otimes\eta)_{m\in\N}$ is a
norm-bounded sequence in~$Y\subseteq Y^{**}$ for each $\eta =
\sum_{i=0}^\infty\eta_i b_i\in\overline{B}$. Since the elements of the
form $e_n^*\otimes\zeta = (\zeta_ie_n^*)_{i\in\N_0}$, where $n\in\N$
and $\zeta = \sum_{i=0}^\infty\zeta_i b_i\in \overline{B}$, span a
norm-dense subspace of~$Y^*$, and
\[ \langle \sigma_m\otimes\eta, e_n^*\otimes\zeta\rangle = 
\sum_{i=0}^\infty\eta_i\zeta_i = \langle e_n^*\otimes\zeta,
\Phi\otimes\eta \rangle\qquad (m\ge n), \] we conclude that the
sequence $(\sigma_m\otimes\eta)_{m\in\N}$ weak$^*$-converges
to~$\Phi\otimes\eta$ in~$Y^{**}$ by a standard elementary result (see,
\emph{e.g.}, \cite[Exercise~3.3]{fabianetal}), as noted by Read
\cite[p.~315]{read}. 

Writing $\xi$ as $\xi = \sum_{n=1}^\infty\xi_n(x_n+y_n)$, we obtain
\begin{alignat*}{3} Q_N T_\xi(\sigma_m\otimes b_0) &= Q_N\biggl(\sum_{n=1}^\infty
  \xi_n\,\sigma_m\otimes\beta_n\biggr)
  &\quad&\text{by~\eqref{defnTxi}}\\ &= \sum_{n=1}^\infty \xi_n
  Q_N(\sigma_m\otimes\beta_n) = \sum_{n=1}^\infty \xi_n
  Q_N(\sigma_m\otimes(x_n+y_n))
  &\quad&\text{by~\eqref{dashcombs}--\eqref{defnSVN}}\\ &=
  Q_N\biggl(\sigma_m\otimes\sum_{n=1}^\infty \xi_n(x_n+y_n)\biggr) =
  Q_N(\sigma_m\otimes\xi)
\end{alignat*}
for each $m\in\N$. This implies that
\[ Q_N^{**}T_\xi^{**}(\Phi\otimes b_0) =
\underset{m\to\infty}{\text{w}^*\text{-}\lim}\,Q_N
T_\xi(\sigma_m\otimes b_0) =
\underset{m\to\infty}{\text{w}^*\text{-}\lim}\,
Q_N(\sigma_m\otimes\xi) = Q_N^{**}(\Phi\otimes\xi) \] because the
bidual of an operator~$T$ is a weak$^*$-continuous extension of~$T$.
Combining this identity with the diagrams~\eqref{commdiagramRead},
\eqref{defnTheta0}, and~\eqref{defnTcheck}, we can now
verify~\eqref{Uxi}:
\begin{align*} 
\Theta_0(\widecheck{T}_\xi)Ub_0 &=
\Theta_0(\widecheck{T}_\xi)\pi_{E_\text{R}}Q_N^{**}(\Phi\otimes b_0) =
\pi_{E_\text{R}}\widecheck{T}_\xi^{**}Q_N^{**}(\Phi\otimes b_0)\\ &=
\pi_{E_\text{R}}Q_N^{**}T_\xi^{**}(\Phi\otimes b_0) =
\pi_{E_\text{R}}Q_N^{**}(\Phi\otimes\xi) = U\xi,
\end{align*}
which completes the proof.
\end{proof}

\subsection*{Acknowledgements} We are grateful to 
Garth Dales (Lancaster) and Zinaida Lykova (Newcastle) for having
drawn our attention to the questions which motivated this paper, that
is, whether every extension of~$\mathscr{B}(E)$ which splits
algebraically also splits strongly, and whether~$\mathscr{B}(E)$ may
have homological bidimension at least two. We would also like to thank
Graham Jameson (Lancaster) and to acknowledge several helpful
conversations with Charles Read (Leeds).

\bibliographystyle{amsplain}

\end{document}